\newtheorem{theorem}{Theorem}
\newtheorem{lemma}[theorem]{Lemma}
\newtheorem{corollary}[theorem]{Corollary}
\newtheorem{conjecture}[theorem]{Conjecture}
\theoremstyle{definition}
\newtheorem*{definition}{Definition}
\newtheorem*{notation*}{Notation}
\begin{document}

\title{On the general Smarandache's sigma product of digits}

\author{Luca Onnis}

\maketitle

\begin{abstract}
    This paper investigates the behaviour of one of the most famous Smarandache's sequence given by A061076 on oeis \cite{1} . In particular we first study the behaviour of two sequences (A061077 \cite{3}, A061078 \cite{4}) strictly connected with the main Smarandache's sigma product of digits. We'll solve some open problems such as the determination of an upper bound for these sequences (which hold for all $n\in\mathbb{N}$) and the determination of a closed formula for each $a(n)$ and $b(n)$. Then combining these results it will be possible to understand the behaviour of the general sequence $c(n)$. Every result will be accompanied by Wolfram Mathematica \cite{5} scripts examples in order to support our thesis.
\end{abstract}
\section{Introduction}\label{Introduction}
Let us give the main definitions and the notations which we'll use during the paper:
\begin{definition}
\begin{enumerate}
    \item Let $n\in\mathbb{N}$. We define $a(n)$ as the $n$th term of the Smarandache's even sequence \cite{2}, or the sum of the products of the digits of the first $n$ even numbers.
    \item Let $n\in\mathbb{N}$. We define $b(n)$ as the $n$th term of the Smarandache's odd sequence \cite{2}, or the sum of the products of the digits of the first $n$ odd numbers.
    \item Let $n\in\mathbb{N}$. We define $c(n)$ as the $n$th term of the general Smarandache's general sequence \cite{2}, or the sum of the products of the digits of the first $n$ numbers.
\end{enumerate}
\end{definition}
The first terms of $a(n)$ are:
\[
0,2,6,12,20,20,22,26,32,40,40,\dots
\]
The first terms of $b(n)$ are:
\[
1, 4, 9, 16, 25, 26, 29, 34, 41, 50,52,\dots
\]
The first terms of $c(n)$ are:
\[
1, 3, 6, 10, 15, 21, 28, 36, 45, 45, 46,\dots
\]
\begin{definition}
\begin{enumerate}
    \item Let $p\in\mathbb{N}$ be an even number. We define the contribution of $p$ in $a(n)$ as the product of its digits. We'll indicate it using the notation $C(p)$.
    \item Let $p\in\mathbb{N}$ be an odd number. We define the contribution of $p$ in $b(n)$ as the product of its digits. We'll indicate it using the same notation $C(p)$.
\end{enumerate}
\end{definition}
For example:
\[
C(3688) = 1152 = 3\cdot6\cdot8\cdot8
\]
\[
C(37) = 21 = 3\cdot 7
\]
\begin{definition}
\begin{enumerate}
    \item Let $b,c\in\mathbb{N}$ even numbers and $a(n)$ be the $n$th term of the Smarandache's even sequence; we define $C(b\rightarrow c)$ as the sum of the "contributes" of all the even numbers from $b$ to $c$. So:
\[
C(b\rightarrow c) = a\Bigl(\frac{c}{2}\Bigr)-a\Bigl(\frac{b}{2}-1\Bigr)
\]
\item Let $d,e\in\mathbb{N}$ even numbers and $b(n)$ be the $n$th term of the Smarandache's odd sequence; we define $C(b\rightarrow c)$ as the sum of the "contributes" of all the odd numbers from $b$ to $c$. So:
\[
C(d\rightarrow e) = b\Bigl(\Bigl\lceil\frac{e}{2}\Bigr\rceil\Bigr)-b\Bigl(\Bigl\lceil\frac{d}{2}\Bigr\rceil-1\Bigr)
\]
\end{enumerate}
\end{definition}
For example:
\[
C(14\rightarrow 22) = 1\cdot4+1\cdot6+1\cdot8+2\cdot0+2\cdot2 = a(11)-a(6)
\]
\[
C(3\rightarrow 7) = 3+5+7 = a(4)-a(1)
\]
\section{Smarandache’s sigma product of digits (even sequence)}
\subsection{First exploration of $a(n)$}
First of all note that:
\[
a(5) = 2+4+6+8+1\cdot 0 = 20 = a(4)
\]
So $a(5)$ is the sum of the contributes of 5 numbers (2,4,6,8,10). But the contribute of a number which contains a 0 in its decimal representation is equal to 0. We can see that:
\begin{table}[H]
    \centering
    \begin{tabular}{c|c|c|c|c}
     $2\rightarrow8$  & $12\rightarrow 98$ & $112\rightarrow 998$ & $1112\rightarrow 9998$ & \dots  \\
     \hline
      20   & 900 & 40500 & 1822500 & \dots 
    \end{tabular}
    \caption{Sum of contributes of numbers from $a$ to $b$ ($a\rightarrow b$) in the sequence $a(n)$}
    \label{tab:my_label}
\end{table}
In general it's possible to prove by induction that:
\[
C(\underbrace{11\dots1}_{\text{"$k$" 1}}2\rightarrow\underbrace{99\dots9}_{\text{"$k$" 9}}8) = 4\cdot 5^{k+1}\cdot 9^{k}
\]
Furthermore, note that:
\[
\begin{cases}
C(0\rightarrow10) = C(2\rightarrow8) \\
C(10\rightarrow100)=C(12\rightarrow98) \\
C(100\rightarrow1000)=C(112\rightarrow998) \\
\mbox{ }\mbox{ }\mbox{ }\mbox{ }\mbox{ }\mbox{ }\mbox{ }\mbox{ }\mbox{ }\vdots\mbox{ }\mbox{ }\mbox{ }\mbox{ }\mbox{ }\mbox{ }\mbox{ }\mbox{ }\mbox{ }\mbox{ }\mbox{ }\mbox{ }\mbox{ }\mbox{ }\mbox{ }\mbox{ }\mbox{ }\mbox{ }\mbox{ }\mbox{ }\mbox{ }\mbox{ }\mbox{ }\vdots
\end{cases}
\]
Because, as said before, the contribute of a number which contains a 0 in its decimal representation is equal to 0. \\
\begin{theorem}\label{tl}
Let $a,b\in\mathbb{N}$ even numbers and $C(a\rightarrow b)$ be the sum of the contributes of the even numbers from $a$ to $b$ in $a(n)$. Then:
\[
C(\underbrace{11\dots1}_{\text{"$k$" 1}}2\rightarrow\underbrace{99\dots9}_{\text{"$k$" 9}}8) = 4\cdot 5^{k+1}\cdot 9^{k}
\]
\end{theorem}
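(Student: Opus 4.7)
The plan is to reduce the statement to an elementary factorization. The key observation, already emphasized by the author, is that any number whose decimal representation contains a $0$ contributes $0$ to the sum $C$. Consequently, the sum over all even numbers in the interval $[\underbrace{11\cdots1}_{k}2,\ \underbrace{99\cdots9}_{k}8]$ equals the sum over only those numbers in this interval whose digits all lie in $\{1,2,\ldots,9\}$. First I would spend a moment confirming that the endpoints $\underbrace{11\cdots1}_{k}2$ and $\underbrace{99\cdots9}_{k}8$ are precisely the smallest and the largest $(k+1)$-digit even numbers with no zero digit, so that the nonzero-contributing terms in the interval are in bijection with strings $d_1 d_2 \cdots d_{k+1}$ satisfying $d_i \in \{1,\ldots,9\}$ for $i \le k$ and $d_{k+1} \in \{2,4,6,8\}$.

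Once this reduction is in place, the contribution of a number is the product $d_1 d_2 \cdots d_{k+1}$, and the sum factorizes as a product of sums:
\[
\sum_{d_1=1}^{9}\cdots\sum_{d_k=1}^{9}\sum_{d_{k+1}\in\{2,4,6,8\}} d_1 d_2\cdots d_{k+1} \;=\; \Bigl(\sum_{d=1}^{9} d\Bigr)^{\!k} \Bigl(\sum_{d\in\{2,4,6,8\}} d\Bigr) \;=\; 45^{k}\cdot 20.
\]
The target identity then follows from the elementary rewriting $45^{k}\cdot 20 = (5\cdot 9)^{k}\cdot(4\cdot 5) = 4\cdot 5^{k+1}\cdot 9^{k}$. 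A sanity check at $k=0$ gives $C(2\rightarrow 8)=2+4+6+8=20=4\cdot 5\cdot 1$, and at $k=1$ one recovers $45\cdot 20 = 900$, matching the table.

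There is no real obstacle: the only step that requires any argument is verifying that excluding numbers with a zero digit leaves exactly the full Cartesian product of nonzero-digit $(k+1)$-digit even numbers, which is immediate from the choice of endpoints. As an alternative I could proceed by induction on $k$, prepending a new leading digit $d_1 \in \{1,\ldots,9\}$ to obtain the recurrence $C_{k+1}=45\cdot C_k$ with base case $C_0 = 20$; this matches the author's suggested inductive approach, but the direct factorization above seems both shorter and more transparent.
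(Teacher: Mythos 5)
Your proof is correct, and it takes a genuinely different route from the paper. The paper argues by induction on $k$: it splits the range $\underbrace{1\cdots1}_{k+1}2 \rightarrow \underbrace{9\cdots9}_{k+1}8$ according to the leading digit $i\in\{1,\dots,9\}$, observes that the block with leading digit $i$ contributes $i$ times the previous level, and sums $\sum_{i=1}^{9} i = 45$ to get the recurrence $C_{k+1}=45\,C_k$ with base case $C_0=20$. Your primary argument instead identifies the set of nonzero contributors in the interval with the full Cartesian product $\{1,\dots,9\}^{k}\times\{2,4,6,8\}$ (after checking that the endpoints are exactly the extreme zero-free $(k{+}1)$-digit even numbers, which is the one point that needs a sentence and which you correctly flag) and then factors the sum of products of digits as $\bigl(\sum_{d=1}^{9}d\bigr)^{k}\bigl(\sum_{d\in\{2,4,6,8\}}d\bigr)=45^{k}\cdot 20$. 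The two arguments have the same combinatorial core --- the leading-digit split in the induction is one step of your factorization --- but yours is non-inductive, shorter, and more transparent about \emph{why} the answer is $45^{k}\cdot 20$. It also generalizes immediately: the same one-line factorization with last digit ranging over $\{1,3,5,7,9\}$ yields $45^{k-1}\cdot 25 = 5^{k+1}\cdot 9^{k-1}$, which is exactly the analogous identity the paper states for the odd sequence (and there only sketches by analogy), so your method proves both claims at once.
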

\begin{proof}
As said before we'll prove this result by induction on $k$. The base of our induction argument is $k=0$:
\[
C(2\rightarrow 8) = 20 = 4\cdot 5^{1}\cdot 9^{0}
\]
Because:
\[
C(2\rightarrow 8)=a(4)=2+4+6+8=20
\]
Now suppose that:
\[
C(\underbrace{11\dots1}_{\text{"$k$" 1}}2\rightarrow\underbrace{99\dots9}_{\text{"$k$" 9}}8) = 4\cdot 5^{k+1}\cdot 9^{k}
\]
And we'll prove that:
\[
C(\underbrace{11\dots1}_{\text{"$k+1$" 1}}2\rightarrow\underbrace{99\dots9}_{\text{"$k+1$" 9}}8) = 4\cdot 5^{k+2}\cdot 9^{k+1}
\]
Note that $\underbrace{11\dots1}_{\text{"$k+1$" 1}}2$ is the first number larger than $10^{k+1}$ such that there are no zeros in its base 10 digit representation. Furthermore:
\[
C(\underbrace{11\dots1}_{\text{"$k+1$" 1}}2\rightarrow1\underbrace{99\dots9}_{\text{"$k$" 1}}8) = C(\underbrace{11\dots1}_{\text{"$k$" 1}}2\rightarrow\underbrace{99\dots9}_{\text{"$k$" 9}}8) = 4\cdot 5^{k+1}\cdot 9^{k}
\]
\[
C(2\underbrace{11\dots1}_{\text{"$k$" 1}}2\rightarrow2\underbrace{99\dots9}_{\text{"$k$" 1}}8) = 2\cdot C(\underbrace{11\dots1}_{\text{"$k$" 1}}2\rightarrow\underbrace{99\dots9}_{\text{"$k$" 9}}8) = 8\cdot 5^{k+1}\cdot 9^{k}
\]
\[
C(3\underbrace{11\dots1}_{\text{"$k$" 1}}2\rightarrow3\underbrace{99\dots9}_{\text{"$k$" 1}}8) = 3\cdot C(\underbrace{11\dots1}_{\text{"$k$" 1}}2\rightarrow\underbrace{99\dots9}_{\text{"$k$" 9}}8) = 12\cdot 5^{k+1}\cdot 9^{k}
\]
\[
\vdots
\]
\[
C(9\underbrace{11\dots1}_{\text{"$k$" 1}}2\rightarrow\underbrace{99\dots9}_{\text{"$k+1$" 1}}8) = 9\cdot C(\underbrace{11\dots1}_{\text{"$k$" 1}}2\rightarrow\underbrace{99\dots9}_{\text{"$k$" 9}}8) = 36\cdot 5^{k+1}\cdot 9^{k}
\]
In order to understand this fundamental concept of the proof, consider the following subcase:
\[
C(12\rightarrow 98) = 900 \mbox{ $\wedge$ } C(112\rightarrow 998) = 40500
\]
Then, since the first digit of 112 is 1:
\[
C(112\rightarrow198) = C(12\rightarrow98) = 900
\]
But the first digit of 212 is 2, so:
\[
C(212\rightarrow 298) = 2\cdot C(12\rightarrow98) = 1800
\]
and so on until:
\[
C(912\rightarrow 998) = 9\cdot C(12\rightarrow98) = 8100
\]
So we'll have that:
\[
C(\underbrace{11\dots1}_{\text{"$k+1$" 1}}2\rightarrow\underbrace{99\dots9}_{\text{"$k+1$" 9}}8) = \sum_{i=1}^{9} [4i\cdot 5^{k+1}\cdot 9^{k}] = 4\cdot 5^{k+1}\cdot 9^{k}\cdot\sum_{i=1}^{9}i
\]
And finally:
\[
C(\underbrace{11\dots1}_{\text{"$k+1$" 1}}2\rightarrow\underbrace{99\dots9}_{\text{"$k+1$" 9}}8) = 4\cdot 5^{k+1}\cdot 9^{k}\cdot 45 = 4\cdot 5^{k+2}\cdot 9^{k+1}
\]
\end{proof}
\subsection{A closed formula for $a(n)$}
Let $a_0$ be an even digit and $1\leq a_n\leq 9$, $0\leq a_{n-1},\dots,a_1\leq 9$; it's possible to note the following identities:
\begin{enumerate}
    \item $C(2\rightarrow a_0)$ = $\Bigl[\frac{a_0^{2}}{4}+\frac{a_0}{2}\Bigr]$
    \item $C(2\rightarrow 10a_1+a_0)$ = $20 + a_1\Bigl[\frac{a_0^{2}}{4}+\frac{a_0}{2}\Bigr]+10a_1(a_1-1)$
    \item $C(2\rightarrow 100a_2+10a_1+a_0)$ = $920+a_1a_2\Bigl[\frac{a_0^{2}}{4}+\frac{a_0}{2}\Bigr]+10a_1a_2(a_1-1)+450a_2(a_2-1)$
    \item $C(2\rightarrow 1000a_3+100a_2+10a_1+a_0)$ = $41420 + a_1a_2a_3\Bigl[\frac{a_0^{2}}{4}+\frac{a_0}{2}\Bigr]+
    10a_1a_2a_3(a_1-1)+450a_2a_3(a_2-1)+20250a_3(a_3-1)$
    \item\dots
\end{enumerate}
For example:
\[
a(3567)=C(2\rightarrow7134) = 41420 + 21\cdot 6+ 10\cdot 21 \cdot (3-1) + 450\cdot 7 \cdot (1-1)+20250\cdot 7 \cdot (7-1)
\]
\[
a(3567)=892466
\]
As you can see using this code:
\begin{lstlisting}[language=Mathematica,caption={To compute the 3567-th term}]
Accumulate[Times @@@ IntegerDigits[Range[2, 10000, 2]]][[3567]]
\end{lstlisting}
The following theorem will generalize this recurrence.
\begin{theorem}\label{t1}
Let $a(n)$ be the sum of the products of the digits of the first $n$ even numbers. Then:
\[
a(n) = C(2\rightarrow2n) = C\Bigl(2\rightarrow\sum_{\substack{k=0\\2\mid a_0}}^{m}a_k\cdot 10^k\Bigr)
\]
\[
a(n) = \frac{5}{11}(45^{m}-1)+\sum_{k=1}^{m}\Bigl[\prod_{k<j\leq m}a_j\Bigr]\Bigl[2\cdot 5^{k}\cdot 9^{k-1}\cdot a_k \cdot (a_k-1)\Bigr]+\Bigl[\prod_{i=1}^{m}a_i\Bigr]\Bigl[\frac{a_0^{2}}{4}+\frac{a_0}{2}\Bigr]
\]
\end{theorem}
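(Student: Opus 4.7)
The plan is to combine Theorem~\ref{tl} with a digit-by-digit decomposition of the range $[2, 2n]$. As a first step I would compute the ``background sum'' $C(2 \to 10^m - 2)$, which covers all even numbers with at most $m$ digits. Because any number $10^k$ contains a zero digit and hence contributes $0$, the sum telescopes into blocks handled by Theorem~\ref{tl}: $C(2 \to 10^m - 2) = \sum_{k=0}^{m-1} C(\underbrace{1\cdots1}_{k}2 \to \underbrace{9\cdots9}_{k}8) = \sum_{k=0}^{m-1} 4 \cdot 5^{k+1} \cdot 9^{k} = 20 \sum_{k=0}^{m-1} 45^k = \tfrac{5}{11}(45^m - 1)$, which is the first term of the claimed formula.

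Next I would partition the even numbers in $[10^m, 2n]$ by their longest shared prefix with $2n$. For each $k \in \{1, 2, \ldots, m\}$ let $B_k$ denote the block of even integers whose digit at position $j$ equals $a_j$ for every $j > k$ and whose digit at position $k$ is some $d$ with $0 \leq d < a_k$ (replacing $0$ by $1$ as the lower bound when $k = m$, since a leading digit cannot be $0$). Let $B_0$ be the final block of even integers whose top $m$ digits are $a_m, a_{m-1}, \ldots, a_1$ and whose last digit $d$ lies in $\{0, 2, \ldots, a_0\}$; note that $B_0$ contains $2n$ itself. Together with $[2, 10^m - 2]$, the blocks $B_0, B_1, \ldots, B_m$ form a disjoint partition of $[2, 2n]$.

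To evaluate each block I would reuse the ``pull out the leading digits'' idea from the proof of Theorem~\ref{tl}. Since any number containing a $0$ digit contributes $0$, the sum over $B_k$ for $k \geq 1$, with fixed digit $d$ at position $k$, factors as $(a_m \cdot a_{m-1} \cdots a_{k+1} \cdot d) \cdot 20 \cdot 45^{k-1}$, where the second factor is the sum of products of digits over all $k$-digit suffixes with even last digit and all digits nonzero (a routine $(2+4+6+8)(1+\cdots+9)^{k-1}$ computation). Summing over $d \in \{0, 1, \ldots, a_k - 1\}$ gives $\prod_{j > k} a_j \cdot \tfrac{a_k(a_k-1)}{2} \cdot 20 \cdot 45^{k-1} = \prod_{k < j \leq m} a_j \cdot 2 \cdot 5^k \cdot 9^{k-1} \cdot a_k(a_k-1)$, which is exactly the $k$-th summand of the theorem. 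For $B_0$, summing over $d \in \{0, 2, \ldots, a_0\}$ produces $\prod_{i=1}^m a_i \cdot (a_0^2/4 + a_0/2)$, the final term.

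The main obstacle is essentially index bookkeeping: verifying the partition is truly disjoint and complete, respecting the empty-product convention that makes the $k = m$ summand correct ($\prod_{j > m} a_j = 1$), and checking the degenerate case $m = 0$ collapses the formula to the base identity $a(n) = a_0^2/4 + a_0/2$. Digits $a_k = 0$ require no special handling: the factor $a_k(a_k-1)$ kills the $k$-th summand, and any prefix product containing such a zero also vanishes, in agreement with the fact that the corresponding blocks contribute $0$ directly.
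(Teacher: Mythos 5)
Your argument is correct, and it reaches the formula by a genuinely different organization than the paper. The paper proves the identity by induction on $m$: it peels off only the leading digit $a_{m+1}$, uses Theorem~\ref{tl} for the $a_{m+1}-1$ complete leading-digit blocks, and invokes the induction hypothesis for the remaining partial block $C(\underbrace{1\cdots1}_{m}2\rightarrow 2n_0)$, then observes that the new term slots in as the $(m+1)$-st summand of $S$. You instead unroll that recursion into a single direct partition of $[2,2n]$ by longest common prefix with $2n$ (the background $[2,10^m-2]$ plus the blocks $B_m,\dots,B_0$), and you evaluate each block in closed form via the factorization of the digit product into (fixed prefix)$\times$(free suffix), with the suffix sum $20\cdot 45^{k-1}$ computed directly rather than pulled from an induction hypothesis. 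The two proofs rest on the same two computations --- the geometric sum $\tfrac{5}{11}(45^m-1)$ for the zero-free full ranges and the triangular sums $\sum_{d<a_k}d=\tfrac{a_k(a_k-1)}{2}$ and $\sum_{d\le a_0,\,2\mid d}d=\tfrac{a_0^2}{4}+\tfrac{a_0}{2}$ --- but your version makes each summand of the final formula visibly correspond to one block of the partition, which explains \emph{why} the formula has that shape, at the cost of the disjointness/completeness bookkeeping you flag (which is standard: classify $x\in[10^m,2n]$ by the highest digit position where it differs from $2n$). Your handling of the edge cases (empty product at $k=m$, digits $a_k=0$ killing both the summand and the corresponding block, and the collapse at $m=0$) is also sound, so I see no gap.
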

\begin{proof}
We'll prove this result by induction on $m$ (which is the number of digits of $2n$ in its decimal representation minus 1). From the definition of this sequence, $a(n)$ is equal to the sum of the contributes of every even number from $2$ to $2n$. The base case is when $m=0$, so when there is only one (even) digit. 
\[
2n = a_0\in\{2,4,6,8\} \implies n\in\{1,2,3,4\}
\]
and:
\[
a(n) = C(2\rightarrow a_0) = \begin{cases}
2 \mbox{ if $a_0=2$} \\ 6 \mbox{ if $a_0=4$} \\ 12 \mbox{ if $a_0=6$} \\ 20 \mbox{ if $a_0=8$}
\end{cases}
\]
so:
\[
C(2\rightarrow a_0) = \Bigl[\frac{a_0^{2}}{4}+\frac{a_0}{2}\Bigr]
\]
Suppose that the identity holds for $m$ and we'll prove that:
\[
C(2\rightarrow2n) = \frac{5}{11}(45^{m+1}-1)+\sum_{k=1}^{m+1}\Bigl[\prod_{k<j\leq m+1}a_j\Bigr]\Bigl[2\cdot 5^{k}\cdot 9^{k-1}\cdot a_k \cdot (a_k-1)\Bigr]+\Bigl[\prod_{i=1}^{m+1}a_i\Bigr]\Bigl[\frac{a_0^{2}}{4}+\frac{a_0}{2}\Bigr]
\]
where $2n$ has $m+2$ digits in base 10 and its representation is:
\[
2n = \sum_{\substack{k=0\\2\mid a_0}}^{m+1}a_k\cdot 10^k = \underbrace{\sum_{\substack{k=0\\2\mid a_0}}^{m}a_k\cdot10^k}_{\text{$2n_0$}} +a_{m+1}10^{m+1}
\]
So:
\[
C(2\rightarrow2n)=C(2\rightarrow10^{m+1})+C(10^{m+1}\rightarrow2n) = C(2\rightarrow\underbrace{99\dots9}_{\text{"$m$" 9}}8)+C(\underbrace{11\dots1}_{\text{"$m+1$" 1}}2\rightarrow 2n)
\]
Because the contribute of numbers which cointais $0$ in their decimal representation is equal to $0$. Furthermore:
\begin{equation}\label{eq4}
    C(2\rightarrow\underbrace{99\dots9}_{\text{"$m$" 9}}8) = \frac{5}{11}(45^{m+1}-1)
\end{equation}
And:
\[
C(\underbrace{11\dots1}_{\text{"$m+1$" 1}}2\rightarrow 2n) = C(\underbrace{11\dots1}_{\text{"$m$" 1}}2\rightarrow 1\underbrace{99\dots9}_{\text{"$m-1$" 9}}8)+ 2C(\underbrace{11\dots1}_{\text{"$m$" 1}}2\rightarrow 1\underbrace{99\dots9}_{\text{"$m-1$" 9}}8)
\]
\[
+\dots+(a_{m+1}-1)C(\underbrace{11\dots1}_{\text{"$m$" 1}}2\rightarrow 1\underbrace{99\dots9}_{\text{"$m-1$" 9}}8)+a_{m+1}C(\underbrace{11\dots1}_{\text{"$m$" 1}}2\rightarrow 2n_0)
\]
So:
\[
C(\underbrace{11\dots1}_{\text{"$m+1$" 1}}2\rightarrow 2n) = \frac{a_{m+1}(a_{m+1}-1)}{2}C(\underbrace{11\dots1}_{\text{"$m$" 1}}2\rightarrow 1\underbrace{99\dots9}_{\text{"$m-1$" 9}}8)+a_{m+1}C(\underbrace{11\dots1}_{\text{"$m$" 1}}2\rightarrow 2n_0)
\]
But we proved before that:
\begin{equation}\label{eq5}
    C(\underbrace{11\dots1}_{\text{"$m$" 1}}2\rightarrow 1\underbrace{99\dots9}_{\text{"$m-1$" 9}}8)=4\cdot 5^{m+1}\cdot 9^{m}
\end{equation}
\[
C(\underbrace{11\dots1}_{\text{"$m+1$" 1}}2\rightarrow 2n) = \frac{a_{m+1}(a_{m+1}-1)}{2}4\cdot 5^{m+1}\cdot 9^{m}+a_{m+1}C(\underbrace{11\dots1}_{\text{"$m$" 1}}2\rightarrow 2n_0)
\]
\begin{equation}\label{eq6}
    C(\underbrace{11\dots1}_{\text{"$m+1$" 1}}2\rightarrow 2n) = a_{m+1}(a_{m+1}-1)\cdot 2\cdot 5^{m+1}\cdot 9^{m}+a_{m+1}C(\underbrace{11\dots1}_{\text{"$m$" 1}}2\rightarrow 2n_0)
\end{equation}
But from the induction hypothesis, since $2n_0$ has $m+1$ digits:
\[
C(\underbrace{11\dots1}_{\text{"$m$" 1}}2\rightarrow 2n_0) = C(2\rightarrow 2n_0) - C(2\rightarrow \underbrace{99\dots9}_{\text{"$m$" 9}}8)
\]
\[
= \frac{5}{11}(45^{m}-1)+\sum_{k=1}^{m}\Bigl[\prod_{k<j\leq m}a_j\Bigr]\Bigl[2\cdot 5^{k}\cdot 9^{k-1}\cdot a_k \cdot (a_k-1)\Bigr]+\Bigl[\prod_{i=1}^{m}a_i\Bigr]\Bigl[\frac{a_0^{2}}{4}+\frac{a_0}{2}\Bigr] - \frac{5}{11}(45^{m}-1)
\]
\[
= \sum_{k=1}^{m}\Bigl[\prod_{k<j\leq m}a_j\Bigr]\Bigl[2\cdot 5^{k}\cdot 9^{k-1}\cdot a_k \cdot (a_k-1)\Bigr]+\Bigl[\prod_{i=1}^{m}a_i\Bigr]\Bigl[\frac{a_0^{2}}{4}+\frac{a_0}{2}\Bigr] = S
\]
And finally combining together equations \ref{eq4},\ref{eq5},\ref{eq6}:
\[
C(2\rightarrow 2n) = \frac{5}{11}(45^{m+1}-1) + a_{m+1}(a_{m+1}-1)\cdot 2\cdot 5^{m+1}\cdot 9^{m} + S
\]
But $a_{m+1}(a_{m+1}-1)\cdot 2\cdot 5^{m+1}\cdot 9^{m}$ represents the $m+1$ term of $S$. So:
\[
C(2\rightarrow 2n) = \frac{5}{11}(45^{m+1}-1)+\sum_{k=1}^{m+1}\Bigl[\prod_{k<j\leq m+1}a_j\Bigr]\Bigl[2\cdot 5^{k}\cdot 9^{k-1}\cdot a_k \cdot (a_k-1)\Bigr]+\Bigl[\prod_{i=1}^{m+1}a_i\Bigr]\Bigl[\frac{a_0^{2}}{4}+\frac{a_0}{2}\Bigr]
\]
which is our thesis.
\end{proof}
This formula could be implemented in Mathematica \cite{5} using this code:
\begin{lstlisting}[language=Mathematica,caption={To compute the first 10000 terms}]
Table[5/11*(45^(Length[IntegerDigits[2n]]-1)-1) + 
Sum[(Product[
IntegerDigits[2n][[j]], {j, 1, 
Length[IntegerDigits[2n]]-k-1}])*(2*(5^
k)*(9^(k-1))*(IntegerDigits[
2n][[Length[IntegerDigits[2 n]]-k]])*(IntegerDigits[
2n][[Length[IntegerDigits[2 n]]-k]] - 1)), {k, 1, 
Length[IntegerDigits[2n]] - 1}] + (Product[
IntegerDigits[2n][[i]], {i,1, Length[IntegerDigits[2n]] - 
1}]) (((IntegerDigits[2 n][[Length[IntegerDigits[2 n]]]])^2)/4 + 
(IntegerDigits[2 n][[Length[IntegerDigits[2 n]]]])/2), 
{n, 1, 10000}]
\end{lstlisting}
Which is equivalent to:
\begin{lstlisting}[language=Mathematica,caption={To compute the first 10000 terms}]
Accumulate[Times @@@ IntegerDigits[Range[2, 20000, 2]]]
\end{lstlisting}
\subsection{Upper bound for $a(n)$}
We'll prove an important inequality between the $n$th term of $a(n)$ and a function which depends on $n$. Look at the following graphs:
\begin{figure}[H]
\centering
\begin{subfigure}[b]{0.4\linewidth}
    \includegraphics[width=\linewidth]{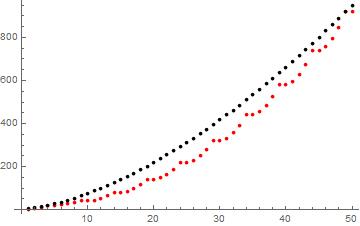}
    \caption{Graph of $a(n)$ (red) and $f(n)$ (black) where $n\in[1,50]$}
  \end{subfigure}
\begin{subfigure}[b]{0.4\linewidth}
    \includegraphics[width=\linewidth]{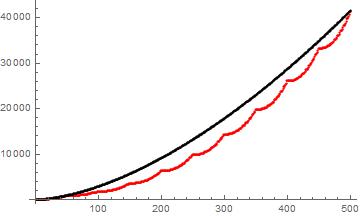}
    \caption{Graph of $a(n)$ (red) and $f(n)$ (black) where $n\in[1,500]$}
  \end{subfigure}
  \caption{Comparison between $a(n)$ and $f(n)$ in different intervals}
  \label{fig:coffee}
\end{figure}
\begin{figure}[H]
\centering
\begin{subfigure}[b]{0.4\linewidth}
    \includegraphics[width=\linewidth]{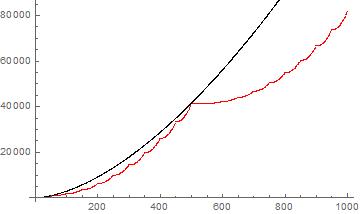}
    \caption{Graph of $a(n)$ (red) and $f(n)$ (black) where $n\in[1,1000]$}
  \end{subfigure}
\begin{subfigure}[b]{0.4\linewidth}
    \includegraphics[width=\linewidth]{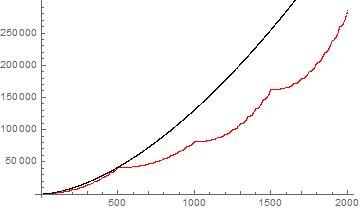}
    \caption{Graph of $a(n)$ (red) and $f(n)$ (black) where $n\in[1,2000]$}
  \end{subfigure}
  \caption{Comparison between $a(n)$ and $f(n)$ in different intervals}
  \label{fig:coffee}
\end{figure}

\begin{theorem}\label{a(n)3}
Let $a(n)$ be the sum of the products of the digits of the first $n$ even numbers. Then:
\[
a(n) \leq \frac{5}{11}\Bigl[45^{\log_{10}(\frac{n+1}{5})+1}-1\Bigr] = f(n) \mbox{ $\forall n\in\mathbb{N}$}
\]
And we have the equality if and only if $n=5\cdot 10^{k}-1$ for some $k\in\mathbb{N}$.
\end{theorem}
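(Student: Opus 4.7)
The plan is to handle the equality case first, then prove the inequality by induction on $M$, where $M+1$ denotes the number of digits of $2n$. For the equality: if $n = 5\cdot 10^{k}-1$ then $2n = 10^{k+1}-2 = \underbrace{99\ldots 9}_{k}8$, so equation \eqref{eq4} in the proof of Theorem \ref{t1} yields $a(n) = \frac{5}{11}(45^{k+1}-1)$; meanwhile $(n+1)/5 = 10^{k}$ forces $f(n) = \frac{5}{11}(45^{k+1}-1)$ as well, so the two sides agree.

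For the inequality I would take an enlarged base case covering $n \in \{1,\dots,49\}$ (i.e.\ $M\in\{0,1\}$, a finite direct check) and carry out the inductive step for $M\ge 2$. Writing $2n = a_{M}\cdot 10^{M} + 2n_0$ with leading digit $a_{M}\in\{1,\dots,9\}$ and $n_0\in\{0,\dots,5\cdot 10^{M-1}-1\}$, the decomposition already derived in the proof of Theorem \ref{t1} gives
\[
a(n) = \frac{5}{11}(45^{M}-1) + 10\,a_{M}(a_{M}-1)\,45^{M-1} + a_{M}\Bigl[\,a(n_0) - \frac{5}{11}(45^{M-1}-1)\Bigr].
\]
Substituting the inductive estimate $a(n_0)\le f(n_0)$ and setting $c=a_{M}$, $X=(n_0+1)/5\in[1/5,10^{M-1}]$, $Y=10^{M-1}$, $\alpha=\log_{10}45$ (so that $Y^{\alpha}=45^{M-1}$), the target $a(n)\le f(n)$ reduces after routine algebra to the single analytic statement
\[
\frac{45+22c^{2}-23c}{45}\,Y^{\alpha} + c\,X^{\alpha} \le (cY+X)^{\alpha}, \qquad c\in\{1,\dots,9\},\; X\in[1/5,Y].
\]

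The main obstacle will be proving this reduced inequality. I would fix $c$ and study the gap $g(X)=(cY+X)^{\alpha}-c X^{\alpha}-\frac{45+22c^{2}-23c}{45}Y^{\alpha}$: since $\alpha>1$, the equation $g'(X)=0$ has at most one positive root, so $g$ is either monotone on $[1/5,Y]$ or first increasing then decreasing, and in either case its minimum on that interval is attained at an endpoint. At $X=Y$ the inequality collapses to $45+22c(c+1)\le 45(c+1)^{\alpha}$, which is an equality at $c=9$ (since $10^{\alpha}=45$) and a finite verification for $c\in\{1,\dots,8\}$. At $X=1/5$, since $Y\ge 10$ in the inductive step, the crude bound $(cY+1/5)^{\alpha}\ge c^{\alpha}Y^{\alpha}$ reduces matters to the finite check $45\,c^{\alpha}\ge 45+22c^{2}-23c+c\cdot 5^{-\alpha}$, dispatched one $c$ at a time. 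The equality characterisation then drops out: $a(n)=f(n)$ forces equality at every step of this chain, which pins $c=9$ and $X=Y$ at every level of the induction, and hence $n = 5\cdot 10^{k}-1$.
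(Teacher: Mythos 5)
Your overall strategy --- a digit-by-digit induction that reduces the bound to the single analytic inequality $\frac{45+22c^{2}-23c}{45}Y^{\alpha}+cX^{\alpha}\le(cY+X)^{\alpha}$ and then an endpoint analysis --- is a genuinely different and substantially more rigorous route than the paper's, which argues only by an intermediate-value-style contradiction (``if $a>f$ somewhere then $a>f$ on the whole interval between consecutive equality points''), a step that has no justification for integer sequences. Your algebraic reduction is correct (I checked that $f(n)=\frac{5}{11}[45(cY+X)^{\alpha}-1]$ and that the substitution yields exactly your displayed inequality), the equality case matches the paper's, and the claim that $g$ attains its minimum at an endpoint is sound, since $\alpha=\log_{10}45>1$ and $g'>0$ near $X=0$ force $g$ to be increasing-then-decreasing.

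There is, however, one genuine gap: the recursion
\[
a(n) = \tfrac{5}{11}(45^{M}-1) + 10\,a_{M}(a_{M}-1)\,45^{M-1} + a_{M}\bigl[a(n_0) - \tfrac{5}{11}(45^{M-1}-1)\bigr]
\]
is \emph{false} whenever $2n_{0}<10^{M-1}$, i.e.\ whenever the digit $a_{M-1}$ of $2n$ equals $0$, and your stated range $n_{0}\in\{0,\dots,5\cdot10^{M-1}-1\}$ includes such cases. There the even numbers in $[a_{M}10^{M},\,2n]$ all contain a zero digit and contribute nothing, so the correct identity is $a(n)=\frac{5}{11}(45^{M}-1)+10a_{M}(a_{M}-1)45^{M-1}$, whereas your bracketed term is strictly negative: for $2n=204$ your formula gives $920+900+2(6-20)=1792$, but $a(102)=1820$. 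Because the identity \emph{under}-counts $a(n)$ in these cases, bounding its right-hand side by $f(n)$ does not bound $a(n)$, so the inductive step fails there as written. The repair is routine --- in that case the needed inequality is $45+22c(c-1)\le 45c^{\alpha}$, essentially the finite check you already perform at $X=1/5$ --- but the case split must be made explicit. (The same hidden assumption, that ``$2n_{0}$ has $m+1$ digits,'' already sits inside the paper's proof of Theorem \ref{t1}, so the decomposition should not be imported without this caveat.)
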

\begin{proof}
First of all note that:
\[
\frac{5}{11}\Bigl[45^{\log_{10}(\frac{n+1}{5})+1}-1\Bigr]\in\mathbb{N}\Longleftrightarrow n = 5\cdot 10^{k}-1
\]
But furthermore $a(5\cdot 10^{k}-1)=a(5\cdot 10^k)$, $\forall k\in\mathbb{N}$. In fact:
\[
a(5\cdot 10^{k}-1) = C(2\rightarrow 10^{k+1}-2) = C(2\rightarrow 10^{k+1}) = a(5\cdot 10^{k})
\]
But in theorem 2 we derived a closed formula for $a(n)$ which depends on the digits in the decimal representation of $2n$. So:
\[
a(5\cdot 10^{k}-1)=a(5\cdot 10^{k})=C(2\rightarrow 1\underbrace{00\dots0}_{\text{$k+1$ zeros}})=\frac{5}{11}(45^{k+1}-1) = f(5\cdot10^{k}-1)
\]
So $a(n)=f(n)$ if and only if $n=5\cdot 10^{k}-1$ for some $k\in\mathbb{N}$. \\
Now we want to prove that:
\[
\frac{5}{11}(45^{m}-1)+\sum_{k=1}^{m}\Bigl[\prod_{k<j\leq m}a_j\Bigr]\Bigl[2\cdot 5^{k}\cdot 9^{k-1}\cdot a_k \cdot (a_k-1)\Bigr]+\Bigl[\prod_{i=1}^{m}a_i\Bigr]\Bigl[\frac{a_0^{2}}{4}+\frac{a_0}{2}\Bigr] \leq \frac{5}{11}\Bigl[45^{\log_{10}(\frac{n+1}{5})+1}-1\Bigr]
\]
$\forall n\in\mathbb{N}$. 
Note that here $m+1$ is the number of digits of $2n$. \\
We'll prove this fact by contradiction. We know that $a(n)$ and $f(n)$ are monotone increasing functions and $a(n)=f(n)$ if and only if $n=5\cdot 10^{k}-1$; so if $\exists n\in\mathbb{N}$ such that $a(n)>f(n)$, then the inequality must hold in a closed interval of the form: $(5\cdot 10^{k}-1,5\cdot 10^{k+1}-1)\subset\mathbb{N}$. That's because $a(n)$ and $f(n)$ intersect each other only when $n=5\cdot 10^{k}-1$ for some $k\in\mathbb{N}$. So:
\[
a(n)>f(n) \mbox{ $\forall n\in (5\cdot 10^{k}-1,5\cdot 10^{k+1}-1)$}
\]
But now we know that:
\[
a(5\cdot 10^{k}-1)=f(5\cdot 10^{k}-1) \mbox{ and } a(5\cdot 10^{k})>f(5\cdot 10^{k})
\]
And this isn't true because $a(5\cdot 10^{k})=a(5\cdot 10^{k}-1)=f(5\cdot 10^{k})$. We arrived at a contradiction caused by supposing that $a(n)>f(n)$ for some $n\in\mathbb{N}$.
\end{proof}
\section{Smarandache’s sigma product of digits (odd sequence)}
\subsection{First exploraton of $b(n)$}
First of all note that as in the first sequence , the contribute of a number which contains a 0 in its decimal representation is equal to 0. We can see that:
\begin{table}[H]
    \centering
    \begin{tabular}{c|c|c|c|c}
     $1\rightarrow9$  & $11\rightarrow 99$ & $111\rightarrow 999$ & $1111\rightarrow 9999$ & \dots  \\
     \hline
      25  & 1125 & 50625 & 2278125 & \dots 
    \end{tabular}
    \caption{Sum of contributes of numbers from $a$ to $b$ ($a\rightarrow b$) in the sequence $b(n)$}
    \label{tab:my_label}
\end{table}
In general it's possible to prove by induction (using the same technique of the proof of theorem \ref{tl}) that:
\begin{equation}\label{b(n)1}
    C(\underbrace{11\dots1}_{\text{"$k$" 1}}\rightarrow\underbrace{99\dots9}_{\text{"$k$" 9}}) = 5^{k+1}\cdot9^{k-1}
\end{equation}
Furthermore, note that:
\[
\begin{cases}
C(1\rightarrow9) =C(1\rightarrow9) \\
C(11\rightarrow99)=C(11\rightarrow99) \\
C(101\rightarrow999)=C(111\rightarrow999) \\
\mbox{ }\mbox{ }\mbox{ }\mbox{ }\mbox{ }\mbox{ }\mbox{ }\mbox{ }\mbox{ }\vdots\mbox{ }\mbox{ }\mbox{ }\mbox{ }\mbox{ }\mbox{ }\mbox{ }\mbox{ }\mbox{ }\mbox{ }\mbox{ }\mbox{ }\mbox{ }\mbox{ }\mbox{ }\mbox{ }\mbox{ }\mbox{ }\mbox{ }\mbox{ }\mbox{ }\mbox{ }\mbox{ }\vdots
\end{cases}
\]
Because, as said before, the contribute of a number which contains a 0 in its decimal representation is equal to 0. \\
\subsection{A closed formula for $b(n)$}
Let $a_0$ be an odd digit and $1\leq a_n\leq 9$, $0\leq a_{n-1},\dots,a_1\leq 9$; it's possible to note the following identities:
\begin{enumerate}
    \item $C(2\rightarrow a_0)$ = $(\frac{a_0+1}{2})^{2}$
    \item $C(2\rightarrow 10a_1+a_0)$ = $25 + a_1(\frac{a_0+1}{2})^{2}+25\frac{a_1(a_1-1)}{2}$
    \item $C(2\rightarrow 100a_2+10a_1+a_0)$ = $1150+a_1a_2(\frac{a_0+1}{2})^{2}+25a_2\frac{a_1(a_1-1)}{2}+1125\frac{a_2(a_2-1)}{2}$
    \item $C(2\rightarrow 1000a_3+100a_2+10a_1+a_0)$ = $51775 +a_1a_2a_3(\frac{a_0+1}{2})^{2}+25a_2a_3\frac{a_1(a_1-1)}{2}+1125a_3\frac{a_2(a_2-1)}{2}+50625\frac{a_3(a_3-1)}{2}$
    \item\dots
\end{enumerate}
For example:
\[
b(4637)=C(1\rightarrow9273) = 51775 + 504 +9450+10125+1822500 = 1894354
\]
\begin{theorem}\label{t2}
Let $b(n)$ be the sum of the products of the digits of the first $n$ even numbers. Then:
\[
b(n) = C(1\rightarrow2n-1) = C\Bigl(1\rightarrow\sum_{\substack{k=0\\a_0\equiv_2 1}}^{m}a_k\cdot 10^k\Bigr)
\]
\[
b(n) = \frac{25}{44}\Bigl[45^{m}-1\Bigr]+\sum_{k=1}^{m}\Bigl[\prod_{k<j\leq m}a_j\Bigr]\Bigl[5^{k+1}\cdot 9^{k-1}\cdot \frac{a_k \cdot (a_k-1)}{2}\Bigr]+\Bigl[\prod_{i=1}^{m}a_i\Bigr]\Bigl[\Bigl(\frac{a_0+1}{2}\Bigr)^{2}\Bigr]
\]
\end{theorem}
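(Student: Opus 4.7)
The plan is to induct on $m$, where $m+1$ is the number of base-10 digits of $2n-1$, mirroring the proof of Theorem~\ref{t1} with the odd building block (\ref{b(n)1}) in place of the even one. For the base case $m=0$, the claim reduces to $b(n) = 1+3+\cdots+a_0 = \left(\frac{a_0+1}{2}\right)^2$ for $a_0\in\{1,3,5,7,9\}$; the prefactor $\frac{25}{44}(45^0-1)$ and the (empty) outer sum both vanish, so only the final product term survives and matches.

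For the inductive step, I would decompose $2n-1 = a_{m+1}\cdot 10^{m+1} + (2n_0-1)$ with $2n_0-1 = \sum_{k=0}^m a_k\cdot 10^k$ of length $m+1$, and write
$$C(1\to 2n-1) \;=\; C\bigl(1\to \underbrace{99\dots9}_{m+1\text{ 9's}}\bigr) \;+\; C\bigl(\underbrace{11\dots1}_{m+2\text{ 1's}}\to 2n-1\bigr),$$
since any integer containing a $0$ digit contributes nothing. The first piece is a sum of the blocks (\ref{b(n)1}) across digit-lengths $1,\dots,m+1$; rewriting $5^{k+1}\cdot 9^{k-1}=25\cdot 45^{k-1}$ collapses it to the geometric series $\frac{25}{44}(45^{m+1}-1)$, the desired prefactor.

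For the second piece I would stratify the $(m+2)$-digit numbers by leading digit $d$. For $d=1,\dots,a_{m+1}-1$ each slice contributes $d\cdot 5^{m+2}\cdot 9^m$ via (\ref{b(n)1}), summing to $\frac{a_{m+1}(a_{m+1}-1)}{2}\cdot 5^{m+2}\cdot 9^m$; the slice $d=a_{m+1}$ contributes $a_{m+1}\cdot C(\underbrace{11\dots1}_{m+1\text{ 1's}}\to 2n_0-1)$, which by the inductive hypothesis applied to $2n_0-1$ (after subtracting off its own $\frac{25}{44}(45^m-1)$ prefactor) equals $a_{m+1}$ times the two trailing pieces of the closed formula for $b(n_0)$.

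Reassembly is then bookkeeping of the same flavour as in Theorem~\ref{t1}: the new stratified slice is precisely the $k=m+1$ summand of the outer sum (empty prefactor product, equal to $1$), and multiplying the remaining $k\leq m$ summands and the final product term by $a_{m+1}$ promotes $\prod_{k<j\leq m}a_j$ and $\prod_{i=1}^m a_i$ to their $m+1$ counterparts. I do not expect a serious obstacle here; the only place real care is needed is in the preliminary induction establishing (\ref{b(n)1}) itself, which the paper only sketches but which follows from the same leading-digit stratification as in Theorem~\ref{tl}, yielding the recursion $C(\underbrace{11\dots1}_{k+1\text{ 1's}}\to\underbrace{99\dots9}_{k+1\text{ 9's}}) = 45\cdot C(\underbrace{11\dots1}_{k\text{ 1's}}\to\underbrace{99\dots9}_{k\text{ 9's}})$ with base $C(1\to 9)=25$.
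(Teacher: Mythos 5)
Your proposal is correct and follows essentially the same route as the paper: induction on $m$, splitting $C(1\rightarrow 2n-1)$ into the full-length block $\frac{25}{44}(45^{m+1}-1)$ plus a leading-digit stratification whose partial slices give $\frac{a_{m+1}(a_{m+1}-1)}{2}\cdot 5^{m+2}\cdot 9^{m}$ and whose final slice is handled by the inductive hypothesis. Your added justification of (\ref{b(n)1}) via the recursion with factor $45$ and base $C(1\rightarrow 9)=25$ fills in a step the paper only asserts by analogy with Theorem~\ref{tl}.
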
 
\begin{proof}
We'll prove this result by induction on $m$ (which is the number of digits of $2n-1$ in its decimal representation minus 1). From the definition of this sequence, $b(n)$ is equal to the sum of the contributes of every even number from $1$ to $2n-1$. The base case is when $m=0$, so when there is only one (odd) digit. 
\[
2n-1 = a_0\in\{1,3,5,7,9\} \implies n\in\{1,2,3,4,5\}
\]
and:
\[
b(n) = C(1\rightarrow a_0) = \begin{cases}
1 \mbox{ if $a_0=1$} \\ 4 \mbox{ if $a_0=3$} \\ 9\mbox{ if $a_0=5$} \\ 16 \mbox{ if $a_0=7$} \\ 25 \mbox{ if $a_0=9$}
\end{cases}
\]
so:
\[
C(1\rightarrow a_0) =\Bigl(\frac{a_0+1}{2}\Bigr)^{2}
\]
Suppose that the identity holds for $m$ and we'll prove that:
\[
C(1\rightarrow2n-1) = \frac{25}{44}\Bigl[45^{m+1}-1\Bigr]+\sum_{k=1}^{m+1}\Bigl[\prod_{k<j\leq m+1}a_j\Bigr]\Bigl[5^{k+1}\cdot 9^{k-1}\cdot \frac{a_k \cdot (a_k-1)}{2}\Bigr]+\Bigl[\prod_{i=1}^{m+1}a_i\Bigr]\Bigl[\Bigl(\frac{a_0+1}{2}\Bigr)^{2}\Bigr]
\]
where $2n-1$ has $m+2$ digits in base 10 and its representation is:
\[
2n-1 = \sum_{\substack{k=0\\a_0\equiv_2 1}}^{m+1}a_k\cdot 10^k = \underbrace{\sum_{\substack{k=0\\a_0\equiv_2 1}}^{m}a_k\cdot10^k}_{\text{$2n_0-1$}} +a_{m+1}10^{m+1}
\]
So:
\[
C(1\rightarrow2n-1)=C(1\rightarrow10^{m+1})+C(10^{m+1}\rightarrow2n-1) = C(1\rightarrow\underbrace{99\dots9}_{\text{"$m+1$" 9}})+C(\underbrace{11\dots1}_{\text{"$m+2$" 1}}\rightarrow 2n-1)
\]
Because the contribute of numbers which cointais $0$ in their decimal representation is equal to $0$. Furthermore:
\begin{equation}\label{eq1}
    C(1\rightarrow\underbrace{99\dots9}_{\text{"$m+1$" 9}}) = \frac{25}{44}(45^{m+1}-1)
\end{equation}
And:
\[
C(\underbrace{11\dots1}_{\text{"$m+2$" 1}}\rightarrow 2n-1) = C(\underbrace{11\dots1}_{\text{"$m+1$" 1}}\rightarrow 1\underbrace{99\dots9}_{\text{"$m$" 9}})+ 2C(\underbrace{11\dots1}_{\text{"$m+1$" 1}}\rightarrow 1\underbrace{99\dots9}_{\text{"$m$" 9}})
\]
\[
+\dots+(a_{m+1}-1)C(\underbrace{11\dots1}_{\text{"$m+1$" 1}}\rightarrow 1\underbrace{99\dots9}_{\text{"$m$" 9}})+a_{m+1}C(\underbrace{11\dots1}_{\text{"$m+1$" 1}}\rightarrow 2n_0-1)
\]
So:
\begin{equation}\label{eq2}
C(\underbrace{11\dots1}_{\text{"$m+2$" 1}}\rightarrow 2n-1) = \frac{a_{m+1}(a_{m+1}-1)}{2}C(\underbrace{11\dots1}_{\text{"$m+1$" 1}}\rightarrow 1\underbrace{99\dots9}_{\text{"$m$" 9}})+a_{m+1}C(\underbrace{11\dots1}_{\text{"$m+1$" 1}}\rightarrow 2n_0-1)    
\end{equation}
But we proved before that: 
\begin{equation}\label{eq3}
    C(\underbrace{11\dots1}_{\text{"$m+1$" 1}}\rightarrow 1\underbrace{99\dots9}_{\text{"$m$" 9}})=5^{m+2}\cdot9^{m}
\end{equation}
\[
C(\underbrace{11\dots1}_{\text{"$m+2$" 1}}\rightarrow 2n-1) = \frac{a_{m+1}(a_{m+1}-1)}{2}\cdot 5^{m+2}\cdot9^{m}+a_{m+1}C(\underbrace{11\dots1}_{\text{"$m+1$" 1}}\rightarrow 2n_0-1)
\]
But from the induction hypothesis, since $2n_0-1$ has $m+1$ digits:
\[
C(\underbrace{11\dots1}_{\text{"$m+1$" 1}}\rightarrow 2n_0-1) = C(1\rightarrow 2n_0-1) - C(1\rightarrow \underbrace{99\dots9}_{\text{"$m$" 9}})
\]
\[
= \frac{25}{44}\Bigl[45^{m}-1\Bigr]+\sum_{k=1}^{m}\Bigl[\prod_{k<j\leq m}a_j\Bigr]\Bigl[5^{k+1}\cdot 9^{k-1}\cdot \frac{a_k \cdot (a_k-1)}{2}\Bigr]+\Bigl[\prod_{i=1}^{m}a_i\Bigr]\Bigl[\Bigl(\frac{a_0+1}{2}\Bigr)^{2}\Bigr] -\frac{25}{44}\Bigl[45^{m}-1\Bigr]
\]
\[
= \sum_{k=1}^{m}\Bigl[\prod_{k<j\leq m}a_j\Bigr]\Bigl[5^{k+1}\cdot 9^{k-1}\cdot \frac{a_k \cdot (a_k-1)}{2}\Bigr]+\Bigl[\prod_{i=1}^{m}a_i\Bigr]\Bigl[\Bigl(\frac{a_0+1}{2}\Bigr)^{2}\Bigr] = S
\]
And finally combining equations \ref{eq1}, \ref{eq2}, \ref{eq3}:
\[
C(1\rightarrow 2n-1) =  \frac{25}{44}(45^{m+1}-1) + \frac{a_{m+1}(a_{m+1}-1)}{2}5^{m+2}\cdot9^{m} + S
\]
But $\frac{a_{m+1}(a_{m+1}-1)}{2}5^{m+2}\cdot9^{m}$ represents the $m+1$ term of $S$. So:
\[
C(1\rightarrow 2n-1) = \frac{25}{44}\Bigl[45^{m+1}-1\Bigr]+\sum_{k=1}^{m+1}\Bigl[\prod_{k<j\leq m+1}a_j\Bigr]\Bigl[5^{k+1}\cdot 9^{k-1}\cdot \frac{a_k \cdot (a_k-1)}{2}\Bigr]+\Bigl[\prod_{i=1}^{m+1}a_i\Bigr]\Bigl[\Bigl(\frac{a_0+1}{2}\Bigr)^{2}\Bigr]
\]
which is our thesis.
\end{proof}
\subsection{Upper bound for $b(n)$}
As for the sequence $a(n)$ we'll prove an important inequality between the $n$th term of $b(n)$ and a function which depends on $n$. Look at the following graphs:
\begin{figure}[H]
\centering
\begin{subfigure}[b]{0.4\linewidth}
    \includegraphics[width=\linewidth]{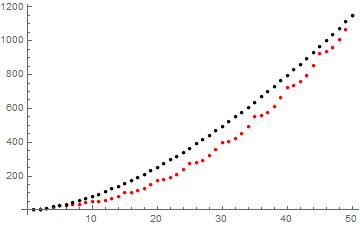}
    \caption{Graph of $b(n)$ (red) and $g(n)$ (black) where $n\in[1,50]$}
  \end{subfigure}
\begin{subfigure}[b]{0.4\linewidth}
    \includegraphics[width=\linewidth]{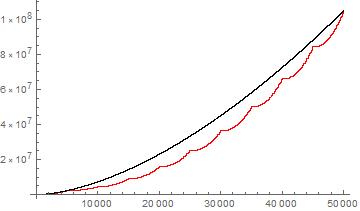}
    \caption{Graph of $b(n)$ (red) and $g(n)$ (black) where $n\in[1,50000]$}
  \end{subfigure}
  \caption{Comparison between $b(n)$ and $g(n)$ in different intervals}
  \label{fig:coffee}
\end{figure}
\begin{theorem}\label{b(n)2}
Let $b(n)$ be the sum of the products of the digits of the first $n$ odd numbers. Then:
\[
b(n) \leq\frac{25}{44}\Bigl[45^{\log_{10}(\frac{n}{5})+1}-1\Bigr] = g(n) \mbox{ $\forall n\in\mathbb{N}$}
\]
And we have the equality if and only if $n=5\cdot 10^{k}$ for some $k\in\mathbb{N}$.
\end{theorem}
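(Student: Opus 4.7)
The plan is to mirror the contradiction argument of Theorem~\ref{a(n)3} in the odd-sequence setting. First, I would verify that $g(n)$ is an integer precisely when $\log_{10}(n/5)$ is a nonnegative integer, i.e.\ exactly when $n = 5\cdot 10^{k}$ for some $k \in \mathbb{N}$, and at such values $g(5\cdot 10^{k}) = \tfrac{25}{44}(45^{k+1}-1)$. I would then pin down the equality locus $\{n : b(n)=g(n)\}$: the $(5\cdot 10^{k})$-th odd number is $10^{k+1}-1 = \underbrace{99\dots 9}_{k+1}$, so equation \eqref{eq1} (or, equivalently, Theorem~\ref{t2} applied with every nonzero digit equal to $9$) gives
\[
b(5\cdot 10^{k}) \;=\; C\bigl(1 \rightarrow \underbrace{99\dots 9}_{k+1}\bigr) \;=\; \tfrac{25}{44}\bigl(45^{k+1}-1\bigr) \;=\; g(5\cdot 10^{k}),
\]
and since $b$ is always an integer but $g$ is an integer only at these special points, this is the entire equality locus. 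For $k \geq 1$, the next odd number $10^{k+1}+1$ contains a zero digit, giving the flat step $b(5\cdot 10^{k}+1) = b(5\cdot 10^{k})$; this is the odd-sequence analogue of the identity $a(5\cdot 10^{k}) = a(5\cdot 10^{k}-1)$ used in the even case.

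With these in hand I would argue by contradiction. Suppose $b(n_{0}) > g(n_{0})$ for some $n_{0}$, and let $k$ be the largest integer with $5\cdot 10^{k} \leq n_{0}$. Since $b$ and $g$ are both nondecreasing and meet only at the isolated points $5\cdot 10^{k}$, the strict inequality $b > g$ must persist throughout the integer interval $(5\cdot 10^{k},\, 5\cdot 10^{k+1})$ containing $n_{0}$. For $k \geq 1$ the flat step then gives
\[
b(5\cdot 10^{k}+1) \;=\; b(5\cdot 10^{k}) \;=\; g(5\cdot 10^{k}) \;\leq\; g(5\cdot 10^{k}+1),
\]
using that $g$ is strictly increasing, which contradicts $b(5\cdot 10^{k}+1) > g(5\cdot 10^{k}+1)$.

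The main obstacle is twofold. First, the ``persistence on an interval'' step needs careful justification: it rests on both functions being monotone, on $g$ being continuous and strictly increasing, and on the integrality argument above which forces $b - g$ to change sign only across the discrete points $5\cdot 10^{k}$. Second, the flat-step trick fails at $k = 0$, because $10^{1} + 1 = 11$ is zero-free, so the interval $(5, 50)$ requires separate treatment; this reduces to directly checking that $b(n) \leq g(n)$ for the finitely many $n \in \{6, \dots, 49\}$, which follows from the closed form of Theorem~\ref{t2} together with a numerical comparison against $g$.
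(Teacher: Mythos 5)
Your proposal follows the same route as the paper: establish that $g(n)$ is an integer exactly at $n=5\cdot 10^{k}$, identify these as the only points where $b(n)=g(n)$ via $b(5\cdot 10^{k})=C(1\rightarrow \underbrace{99\dots9}_{k+1})=\frac{25}{44}(45^{k+1}-1)$, and then argue by contradiction using monotonicity together with the flat step $b(5\cdot 10^{k}+1)=b(5\cdot 10^{k})$. You are in fact more careful than the paper in one respect: the paper asserts the flat step for every $k$, but as you observe it fails at $k=0$, since the sixth odd number is $11$, which is zero-free, so $b(6)=26>25=b(5)$; your separate finite check on $n\in\{6,\dots,49\}$ repairs an oversight that the paper's proof does not acknowledge.

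However, the step you flag as ``the main obstacle'' is a genuine gap, and it is closed neither by you nor by the paper. From $b(n_{0})>g(n_{0})$ for a single $n_{0}\in(5\cdot 10^{k},5\cdot 10^{k+1})$ you (and the paper) conclude that $b>g$ throughout that interval, in particular at $n=5\cdot 10^{k}+1$. Monotonicity of $b$ and $g$ does not give this: two nondecreasing functions can have $b-g$ change sign many times inside the interval without ever vanishing there, and indeed $b(n)-g(n)$ is never zero at non-special $n$ (since $g(n)$ is irrational there), so ``no interior intersection'' places no constraint on the sign of $b-g$. The integrality observation only locates where equality \emph{can} occur; it does not force the sign of $b-g$ to be constant between consecutive equality points. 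To actually prove the bound one must compare the closed formula of Theorem~\ref{t2} against $g(n)$ quantitatively on each block $10^{m}\leq 2n-1<10^{m+1}$ (or supply some other genuine estimate); neither your proposal nor the paper's proof does this, so both arguments are incomplete at the same place.
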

\begin{proof}
First of all note that:
\[
\frac{25}{44}\Bigl[45^{\log_{10}(\frac{n}{5})+1}-1\Bigr]\in\mathbb{N}\Longleftrightarrow n = 5\cdot 10^{k}
\]
Furthermore:
\[
b(5\cdot 10^{k}) = C(1\rightarrow 10^{k+1}-1) = C(1\rightarrow\underbrace{99\dots 9}_{\text{"$k+1$" 9}})
\]
But from Theorem 4 we know that:
\[
C(1\rightarrow\underbrace{99\dots 9}_{\text{"$k+1$" 9}}) = \frac{25}{44}\Bigl[45^{k+1}-1\Bigr]
\]
So $b(n)=g(n)$ if and only if $n=5\cdot 10^{k}$ for some $k\in\mathbb{N}$. \\
As in theorem 3, in order to prove the second part we proceed by contradiction; We know that $b(n)$ and $g(n)$ are monotone increasing functions and $b(n)=g(n)$ if and only if $n=5\cdot 10^{k}$; so if $\exists n\in\mathbb{N}$ such that $b(n)>g(n)$, then the inequality must hold in an open interval of the form: $(5\cdot 10^{k},5\cdot 10^{k+1})\subset\mathbb{N}$. That's because $b(n)$ and $g(n)$ intersect each other only when $n=5\cdot 10^{k}$ for some $k\in\mathbb{N}$. So:
\[
a(n)>f(n) \mbox{ $\forall n\in (5\cdot 10^{k},5\cdot 10^{k+1})$}
\]
But now we know that:
\[
b(5\cdot 10^{k})=g(5\cdot 10^{k}) \mbox{ and } b(5\cdot 10^{k}+1)>g(5\cdot 10^{k}+1)
\]
And this isn't true because $b(5\cdot 10^{k}+1)=b(5\cdot 10^{k})=g(5\cdot 10^{k})$. We arrived at a contradiction caused by supposing that $b(n)>g(n)$ for some $n\in\mathbb{N}$.
\end{proof}
\section{The general Smarandache’s sigma product of digits}
\subsection{First exploration of $c(n)$}
First of all note that:
\begin{equation}\label{c(n)1}
  c(2n) = a(n)+b(n) \mbox{ $\forall n\in\mathbb{N}$}  
\end{equation}
In fact $a(n)$ gives the sum of the contributes of the even numbers less than or equal to $n$ while $b(n)$ gives the contributes of the odd numbers less than or equal to $n$.
\subsection{A closed formula for $c(n)$}
From theorems \ref{t1}, \ref{t2} and from equation \ref{c(n)1} we can easily compute the $n$th term of the general sequence $c(n)$. In fact:
\[
\begin{cases}
c(2n) = a(n)+b(n) \\ c(2n+1) = a(n)+b(n)+C(2n+1)
\end{cases}
\]
Where $C(2n+1)$ is the contribute of $2n+1$ in $c(n)$ (or simply the product of the digits of $2n+1$.
\subsection{Upper bound for $c(n)$}
\begin{lemma}
Let $c(n)$ be the sum of the products of the digits of the first $n$ numbers and $C(n)$ the product of the digits of $n$. Then:
\[
c(n) \leq \begin{cases}
\frac{25}{44}\Bigl[45^{\log_{10}(n)}-1\Bigr]+\frac{5}{11}\Bigl[45^{\log_{10}(n+2)}-1\Bigr] \mbox{ if $n$ is even} \\
\frac{25}{44}\Bigl[45^{\log_{10}(n)}-1\Bigr]+\frac{5}{11}\Bigl[45^{\log_{10}(n+2)}-1\Bigr] + C(n) \mbox{ if $n$ is odd}
\end{cases}
\]
$\forall n\in\mathbb{N}$
\end{lemma}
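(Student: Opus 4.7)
The plan is to reduce the lemma to the already-established upper bounds $f$ and $g$ for $a(n)$ and $b(n)$ from Theorems \ref{a(n)3} and \ref{b(n)2}, using the decomposition $c(2n) = a(n) + b(n)$ of equation \ref{c(n)1} together with the odd-indexed variant $c(2n+1) = a(n) + b(n) + C(2n+1)$ recorded just before the lemma. The substantive analytic content has already been done; what remains is parity splitting and a small logarithmic simplification.

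\textbf{Case $n$ even.} I would write $n = 2m$, so $c(n) = a(m) + b(m) \leq f(m) + g(m)$. The lemma then follows from the identities
\[
\log_{10}\!\Bigl(\frac{m+1}{5}\Bigr) + 1 = \log_{10}(2(m+1)) = \log_{10}(n+2),
\]
\[
\log_{10}\!\Bigl(\frac{m}{5}\Bigr) + 1 = \log_{10}(2m) = \log_{10}(n),
\]
which turn $f(m) + g(m)$ into exactly the right-hand side of the even case of the lemma.

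\textbf{Case $n$ odd.} I would write $n = 2m+1$, so $c(n) = a(m) + b(m) + C(n)$, and apply the same bounds $a(m) \leq f(m)$ and $b(m) \leq g(m)$. The same log identities now give $f(m) = \frac{5}{11}\bigl[45^{\log_{10}(n+1)}-1\bigr]$ and $g(m) = \frac{25}{44}\bigl[45^{\log_{10}(n-1)}-1\bigr]$, because $2(m+1) = n+1$ and $2m = n-1$. Since the map $t \mapsto 45^{\log_{10} t}$ is monotone increasing on $(0,\infty)$, replacing $n+1$ by $n+2$ in the first term and $n-1$ by $n$ in the second term only weakens the inequality, yielding the odd-case bound stated in the lemma.

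\textbf{Main obstacle.} There is no genuine obstacle here; the heavy lifting was carried out in Theorems \ref{a(n)3} and \ref{b(n)2}. The only care required is the bookkeeping of the change of variable $n \leftrightarrow m$ inside the logarithms, and the use of monotonicity to round $n\pm 1$ up to $n$ or $n+2$ in the odd case so that a single uniform formula covers both parities. It is worth noting in passing that the odd-case bound is intentionally slightly loose: one could state the tighter bound with $n+1$ and $n-1$ in place of $n+2$ and $n$, but the form given is cleaner and is what is used in the sequel.
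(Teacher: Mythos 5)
Your proposal is correct and takes essentially the same route as the paper: split by parity via $c(2m)=a(m)+b(m)$ and $c(2m+1)=a(m)+b(m)+C(2m+1)$, apply the upper bounds of Theorems \ref{a(n)3} and \ref{b(n)2}, and simplify the logarithms. If anything, your odd case is slightly more careful than the paper's, which writes the final bound with $n$ and $n+2$ directly without noting that this requires rounding $n-1$ and $n+1$ upward via the monotonicity of $t\mapsto 45^{\log_{10}t}$, as you do explicitly.
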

\begin{proof}
The result is simply obtained by combining the previous theorems regarding the upper bounds of $a(n)$ and $b(n)$. If $n$ is an even number, then $n=2n_1$ for some $n_1\in\mathbb{N}$. Furthermore , from equation \ref{c(n)1} we know that:
\[
c(2n_1) = a(n_1)+b(n_1) = a\Bigl(\frac{n}{2}\Bigr)+b\Bigl(\frac{n}{2}\Bigr)
\]
But combining theorem  \ref{a(n)3} and theorem \ref{b(n)2} we'll have the following inequality:
\[
c(n) = a\Bigl(\frac{n}{2}\Bigr)+b\Bigl(\frac{n}{2}\Bigr) \leq \frac{25}{44}\Bigl[45^{\log_{10}(n)}-1\Bigr]+\frac{5}{11}\Bigl[45^{\log_{10}(\frac{n+2}{10})+1}-1\Bigr]
\]
Instead for odd numbers we'll have the same inequality except that we must add the contribute of the argument $C(n)$ (or simply the product of its digits). In fact if $n$ is odd, then $n=2n_2+1$ for some $n_2\in\mathbb{N}$. So:
\[
c(n)=c(2n_2+1)=c(2n_2)+C(2n_2)=a(n_2)+b(n_2)+C(2n_2) 
\]
And finally:
\[
c(n)\leq \frac{25}{44}\Bigl[45^{\log_{10}(n)}-1\Bigr]+\frac{5}{11}\Bigl[45^{\log_{10}(\frac{n+2}{10})+1}-1\Bigr] + C(n)
\]
\end{proof}
\begin{conjecture}
Let $c(n)$ be the sum of the products of the digits of the first $n$ numbers. Then:
\[
c(n) \leq\frac{25}{44}\Bigl[45^{\log_{10}(n)}-1\Bigr]+\frac{5}{11}\Bigl[45^{\log_{10}(\frac{n+2}{10})+1}-1\Bigr]=h(n) \mbox{ $\forall n\in\mathbb{N}\setminus\{10^{k}-1\}_{k\in\mathbb{Z}^{+}}$} 
\]
\end{conjecture}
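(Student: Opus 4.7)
The plan divides according to the parity of $n$. For even $n$ the Lemma already yields $c(n)=a(n/2)+b(n/2)\le f(n/2)+g(n/2)$, and inserting the definitions of $f$ and $g$ shows that $f(n/2)+g(n/2)$ coincides with $h(n)$. So the conjecture reduces entirely to the odd case.

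For odd $n=2m+1$ I would start from the finer decomposition $c(n)=a(m)+b(m+1)$, whereby Theorems~\ref{a(n)3} and~\ref{b(n)2} produce
\[
c(n)\le f(m)+g(m+1)=\tfrac{45}{44}\bigl[\phi(n+1)-1\bigr]=:h_o(n),\qquad \phi(x):=x^{\log_{10}45}.
\]
A short Taylor calculation gives $h(n)-h_o(n)=\tfrac{1}{44}\bigl[25\phi(n)+20\phi(n+2)-45\phi(n+1)\bigr]\sim -\tfrac{5}{44}\phi'(n+1)$, which is negative as soon as $n$ is moderately large (already at $n=101$). So the monotone chain $c(n)\le h_o(n)\le h(n)$ breaks down, and the excess $h_o(n)-h(n)\sim\tfrac{5\log_{10}45}{44}(n+1)^{\log_{10}45-1}$ must instead be absorbed by the slack in Theorems~\ref{a(n)3} and~\ref{b(n)2}.

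The key structural observation is that the equalities $a(m)=f(m)$ and $b(m+1)=g(m+1)$ both reduce to the single condition $n=10^{k+1}-1$, which is exactly the excluded family. For every other odd $n$, at least one of the two bounds is strict, and Theorems~\ref{t1} and~\ref{t2} express the deficits $f(m)-a(m)$ and $g(m+1)-b(m+1)$ as explicit non-negative sums of digit-dependent terms, each vanishing precisely when the corresponding digit of $2m$ or $2m+1$ equals $9$. The strategy is then: (i) bound the excess $h_o(n)-h(n)$ via the discrete second-difference expansion above, so that it is of order $n^{\log_{10}45-1}$; (ii) isolate the digit-level deficit contributions from the closed forms in Theorems~\ref{t1}--\ref{t2}; (iii) verify digit pattern by digit pattern that the total deficit dominates the excess.

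The hardest subcase, and therefore the main obstacle, is the family $n=\underbrace{99\cdots 9}_{d}a_0$ with $a_0\in\{1,3,5,7\}$: $C(n)$ is almost maximal, almost every deficit term from Theorems~\ref{t1}--\ref{t2} vanishes, and the only remaining slack comes from the factor $\bigl(\tfrac{a_0+1}{2}\bigr)^2$ in Theorem~\ref{t2}. This produces a deficit of order $\bigl(25-\bigl(\tfrac{a_0+1}{2}\bigr)^2\bigr)9^d\asymp 9^{d+1}$, whereas the excess $h_o(n)-h(n)$ is of order $10^{(\log_{10}45-1)(d+1)}$; the ratio is $\bigl(9/10^{\log_{10}45-1}\bigr)^{d+1}=2^{d+1}\to\infty$, so the inequality does close for large $d$, and the finitely many small $d$ can be checked by direct computation. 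For any remaining odd $n$ with at least one interior digit strictly below $9$, additional deficit terms appear and the argument only gets easier.
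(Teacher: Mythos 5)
Be aware first that the paper does not prove this statement: it is explicitly labelled a Conjecture, supported only by the preceding Lemma (which carries the extra additive term $C(n)$ for odd $n$), by plots, and by a table of the exceptional values $10^{k}-1$. So there is no proof in the paper to compare yours against; the only question is whether your argument stands on its own. Your even case does: $c(2n_1)=a(n_1)+b(n_1)\leq f(n_1)+g(n_1)$ is exactly the paper's Lemma, and the identification $f(n/2)+g(n/2)=h(n)$ is a correct computation. You also correctly diagnose the central difficulty of the odd case, namely that $f(m)+g(m+1)=\tfrac{45}{44}\bigl[45^{\log_{10}(n+1)}-1\bigr]$ exceeds $h(n)$ by roughly $\tfrac{5}{44}\log_{10}(45)\,(n+1)^{\log_{10}45-1}$, so the conjecture cannot follow from Theorems \ref{a(n)3} and \ref{b(n)2} by a monotone chain.

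However, steps (ii) and (iii) of your plan are a program, not a proof, and this is a genuine gap. You carry out the deficit-versus-excess comparison only for the single family $n=\underbrace{9\cdots9}_{d}a_0$ and then assert that every other digit pattern ``only gets easier.'' That is not justified and is not obviously true: for odd $n$ just above a power of ten, e.g. $n=10^{d+1}+1$, the available slack $[f(m)-a(m)]+[g(m+1)-b(m+1)]$ is itself only of order $n^{\log_{10}45-1}$ --- the same order as the excess you must absorb --- because $a$ and $b$ are flat across blocks of integers containing a zero digit while $f$ and $g$ keep growing; whether the inequality closes there depends on actual constants (equivalently, on how far $c(10^{d+1}-1)$ overshoots $h(10^{d+1}-1)$, which the paper's table shows also grows at that same rate). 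A genuine proof therefore needs a uniform, explicit lower bound on the slack over all digit patterns, set against an explicit upper bound on the excess; ``additional deficit terms appear'' is not such a bound, and the description of the deficits as terms ``vanishing precisely when the corresponding digit equals $9$'' does not match the actual structure of the closed formulas in Theorems \ref{t1} and \ref{t2}. Finally, the ``finitely many small $d$'' you defer to computation comprise, for each $d$, on the order of $10^{d}$ values of $n$, so the finite verification set must be specified. As it stands the proposal is a plausible and well-motivated attack on the conjecture, but the decisive step is missing and the statement should still be regarded as open.
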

We really think that the equality above holds without the $C(n)$ part for all natural numbers not equal to $10^{k}-1$ for some positive integer $k$. Look at the following graphs:
\begin{figure}[H]
\centering
\begin{subfigure}[b]{0.4\linewidth}
    \includegraphics[width=\linewidth]{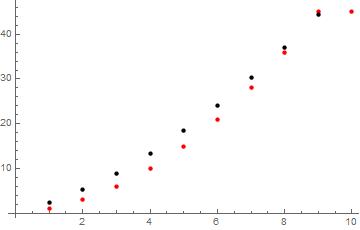}
    \caption{Graph of $c(n)$ (red) and $h(n)$ (black) where $n\in[1,10]$}
  \end{subfigure}
\begin{subfigure}[b]{0.4\linewidth}
    \includegraphics[width=\linewidth]{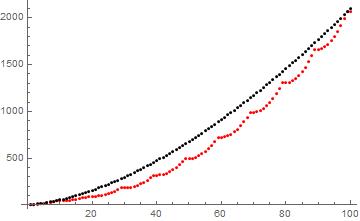}
    \caption{Graph of $c(n)$ (red) and $h(n)$ (black) where $n\in[1,100]$}
  \end{subfigure}
  \caption{Comparison between $c(n)$ and $h(n)$ in different intervals}
  \label{fig:coffee}
\end{figure}
\begin{figure}[H]
\centering
\begin{subfigure}[b]{0.4\linewidth}
    \includegraphics[width=\linewidth]{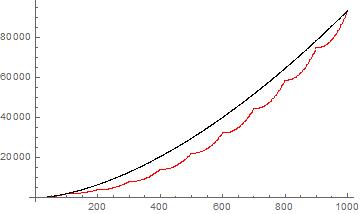}
    \caption{Graph of $c(n)$ (red) and $h(n)$ (black) where $n\in[1,1000]$}
  \end{subfigure}
\begin{subfigure}[b]{0.4\linewidth}
    \includegraphics[width=\linewidth]{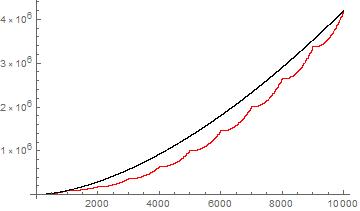}
    \caption{Graph of $c(n)$ (red) and $h(n)$ (black) where $n\in[1,10000]$}
  \end{subfigure}
  \caption{Comparison between $c(n)$ and $h(n)$ in different intervals}
  \label{fig:coffee}
\end{figure}
As you can see $c(n)\leq h(n)$ except for some particular values. The only ones which I found are in fact of the form $10^{k}-1$. Look at the following table:
\begin{table}[H]
    \centering
    \begin{tabular}{c|c|c|c}
    \hline
       $c(9) = 45$  &  $c(99)=2070$ & $c(999) = 93195$ & \dots\\
         \hline
       $h(9) \approx 44.4$  & $h(99) \approx 2066.3$ & $h(999)=93177.9$ & \dots \\
       \hline
    \end{tabular}
    \caption{Comparison between $c(n)$ and $h(n)$ for particular values of $n$}
    \label{tab:my_label}
\end{table}
\section{Final considerations}
In this section we'll analyze the obtained results and we'll combine them together in order to prove some interesting corollaries. In particular we want to study the behaviour of the sequence $\frac{a(n)}{b(n)}$. Surprisingly this sequence is bounded as suggested from these plots:
\begin{figure}[H]
\centering
\begin{subfigure}[b]{0.4\linewidth}
    \includegraphics[width=\linewidth]{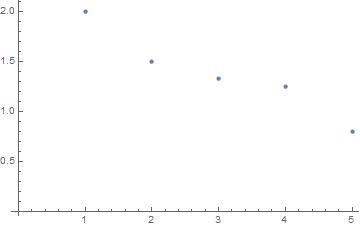}
    \caption{Plot of $\frac{a(n)}{b(n)}$ where $n\in[1,5]$}
  \end{subfigure}
\begin{subfigure}[b]{0.4\linewidth}
    \includegraphics[width=\linewidth]{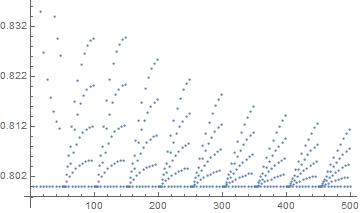}
    \caption{Plot of $\frac{a(n)}{b(n)}$ where $n\in[1,500]$}
  \end{subfigure}
  \caption{Plot of $\frac{a(n)}{b(n)}$ in different intervals}
  \label{fig:coffee}
\end{figure}
\begin{conjecture}\label{con1}
Let $a(n)$ and $b(n)$ be defined as before. Then:
\[
\frac{4}{5}\leq\frac{a(n)}{b(n)}\leq 2\mbox{ $\forall n\in\mathbb{N}$}
\]
\[
\frac{4}{5}\leq\frac{a(n)}{b(n)}< 1 \mbox{ $\forall n\geq5$}
\]
\end{conjecture}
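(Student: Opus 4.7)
The plan is to organise the first $n$ even and odd numbers into blocks of ten, on which the digit product factorises cleanly, and then reduce each of the three assertions to an algebraic identity on a ``block tail''.

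Write $n = 5q + r$ with $q \ge 0$ and $0 \le r \le 4$. For $q \ge 1$, the decade $\{10q,\dots,10q+9\}$ contributes $C(q)\cdot(0{+}2{+}4{+}6{+}8)=20\,C(q)$ to $a$ and $C(q)\cdot(1{+}3{+}5{+}7{+}9)=25\,C(q)$ to $b$, while the initial partial decade contributes $20$ and $25$. Summing complete decades and adding the $r$-tail of decade $q$ gives
\[
a(5q+r)=20(1+T_q)+E_r\,C(q),\qquad b(5q+r)=25(1+T_q)+O_r\,C(q),
\]
where $T_q=\sum_{k=1}^{q-1}C(k)$, $(E_r)_{r=0}^{4}=(0,2,6,12,20)$, and $(O_r)_{r=0}^{4}=(0,1,4,9,16)$. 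The four cases $q=0$ (i.e.\ $n\in\{1,2,3,4\}$) would be dispatched by hand and yield the ratios $2,\,3/2,\,4/3,\,5/4$, which in particular realise the equality $a(1)/b(1)=2$.

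The two non-strict bounds then follow from finite tabulations. Since $(2O_r-E_r)=(0,0,2,6,12)\ge 0$ one has $2b(n)-a(n)=30(1+T_q)+(2O_r-E_r)C(q)\ge 30>0$ for every $q\ge 1$, giving $a(n)\le 2\,b(n)$. Since $(5E_r-4O_r)=(0,6,14,24,36)\ge 0$ one has $5a(n)-4b(n)=(5E_r-4O_r)C(q)\ge 0$, giving $a(n)\ge \tfrac{4}{5}b(n)$; equality holds exactly when $r=0$ or $C(q)=0$, which explains why the value $4/5$ is attained at every multiple of $5$.

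The subtle part is the strict bound $a(n)<b(n)$ for $n\ge 5$. From $b(n)-a(n)=5(1+T_q)-(E_r-O_r)C(q)$ with $(E_r-O_r)\le 4$, it reduces to the key lemma
\[
5\Bigl(1+\sum_{k=1}^{q-1}C(k)\Bigr) > 4\,C(q) \qquad\text{for every }q\ge 1.
\]
For $q\in\{1,\dots,9\}$ this is the quadratic $5q^2-13q+10>0$, whose discriminant is $-31$. For $q$ with $d\ge 3$ digits it follows from the closed form $\sum_{k=1}^{10^{d-1}-1}C(k)=\tfrac{45(45^{d-1}-1)}{44}$ (obtained by factoring the sum of products across digit positions) together with $C(q)\le 9^d$; the ratio of the two sides grows like $\tfrac{225}{1584}\cdot 5^{d-1}$ and already exceeds $1$ at $d=3$. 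The main obstacle is the two-digit range $q=10a+b$, where the minimal $T_{10}=45$ is not large enough to beat the maximal $4C(q)\le 324$ and the bare block-completion bound fails; here I would use the explicit formula $T_q=45+\tfrac{45a(a-1)}{2}+\tfrac{a\,b(b-1)}{2}$ to rewrite the inequality as $230+\tfrac{225\,a(a-1)}{2}+\tfrac{a\,b(5b-13)}{2}>0$, which is immediate for $b\ge 3$ and for $b=0$, and is closed for $b\in\{1,2\}$ by a short check in which the only negative term is linear in $a$ and dominated by the constant $230$ already at $a=1$.
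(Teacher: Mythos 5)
Your proposal is correct, and it does something the paper itself does not: the paper leaves this statement as a conjecture, offering only the single comparison $a(5)<b(5)$, a qualitative remark that $a(n)$ accumulates more zero contributions than $b(n)$, and the observation that the lower bound would follow from $5a(n)-4b(n)\ge 0$ ``as suggested from the plot.'' Your decade decomposition $a(5q+r)=20(1+T_q)+E_rC(q)$ and $b(5q+r)=25(1+T_q)+O_rC(q)$ turns all three assertions into finite positivity checks plus the key lemma $5(1+T_q)>4C(q)$, and each ingredient checks out: the tables $(2O_r-E_r)=(0,0,2,6,12)$ and $(5E_r-4O_r)=(0,6,14,24,36)$; the one-digit quadratic $5q^2-13q+10>0$ with discriminant $-31$; the two-digit identity $T_{10a+b}=45+\tfrac{45a(a-1)}{2}+\tfrac{ab(b-1)}{2}$ and the resulting inequality $230+\tfrac{225a(a-1)}{2}+\tfrac{ab(5b-13)}{2}>0$, whose only negative term is at worst $-4a\ge -36$; and the $d\ge 3$ case via $\sum_{k=1}^{10^{d-1}-1}C(k)=\tfrac{45(45^{d-1}-1)}{44}$ against $C(q)\le 9^d$, which already wins at $d=3$ since $5\cdot 2071=10355>2916=4\cdot 9^{3}$. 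The steps you leave implicit (the verification of the block formulas, the $q=0$ ratios $2,\,3/2,\,4/3,\,5/4$, the $b\in\{1,2\}$ subcases) are routine. Written out in full, this upgrades Conjecture \ref{con1} from a conjecture to a theorem, and --- since your identity $5a(n)-4b(n)=(5E_r-4O_r)C(q)$ shows the ratio equals $\tfrac{4}{5}$ exactly at the multiples of $5$ --- it also makes Corollary \ref{t8} unconditional.
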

It's easy to see that $\frac{a(n)}{b(n)}< 1$ , $\forall n\geq5$. In fact:
\[
a(4)=2+4+6+8 > 1+3+5+7 = b(4)
\]
But from $n=5$:
\[
a(5) = 2+4+6+8+1\cdot0 < 1+3+5+7+9 = b(5)
\]
Note that the contribution in $a(n)$ of terms like $10,20,30,\dots$ is equal to 0, while in $b(n)$ such numbers can not exists because it counts the contributions only from odd numbers. Using this argument we can see that there are more "zero-contributions" in $a(n)$ than in $b(n)$, and this is sufficient to understand the first inequality. In order to prove the second inequality it's sufficient to show that:
\[
5a(n)-4b(n) \geq 0 \mbox{ $\forall n\in\mathbb{N}$}
\]
The conjecture is probably true as suggested from the plot of $5a(n)-4b(n)$.
\begin{corollary}\label{t8}
Let $a(n)$ and $b(n)$ be defined as before. Then:
\[
\lim_{n\to+\infty}\frac{a(n)}{b(n)}=\frac{4}{5}
\]
\end{corollary}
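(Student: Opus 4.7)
I will exploit an algebraic coincidence visible in Theorems \ref{t1} and \ref{t2}: the ``round-number'' constants satisfy $\tfrac{5}{11}:\tfrac{25}{44}=4:5$, and the $k$-th summand of $a(n)$ carries a factor $2\cdot 5^k$ while the $k$-th summand of $b(n)$ carries $\tfrac{5^{k+1}}{2}$, again in ratio $4:5$. The plan is to form $\tfrac{5}{4}a(n)-b(n)$ and show that almost everything cancels. Since the dominant pieces of $a(n)$ and $b(n)$ grow like $45^m$, where $m+1$ is the number of digits of $2n$, while the residual will only be of order $9^m$, the ratio $a(n)/b(n)$ will be forced to $\tfrac{4}{5}$.

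\textbf{Step 1: $n\not\equiv 0\pmod 5$.} Here the last digit of $2n$ lies in $\{2,4,6,8\}$ and $2n-1$ shares all higher digits $a_m,\dots,a_1$ with $2n$; the two $4:5$ ratios above will then force exact cancellation of the leading term and every $k$-sum term of $\tfrac{5}{4}a(n)-b(n)$. A short calculation on the surviving ``$a_0$-terms'' gives
\[
\tfrac{5}{4}a(n)-b(n)=\Bigl[\prod_{i=1}^{m}a_i\Bigr]\cdot\frac{a_0(a_0+10)}{16}\;\le\;9^{m+1}.
\]
Combined with the trivial lower bound $b(n)\ge\tfrac{25}{44}(45^m-1)$ (every summand in Theorem \ref{t2} is non-negative), this yields $\bigl|\tfrac{5a(n)}{4b(n)}-1\bigr|=O(5^{-m})\to 0$, proving the limit along this subsequence.

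\textbf{Step 2 and the main obstacle.} The remaining case $n\equiv 0\pmod 5$ is exactly where the digit alignment of Step~1 breaks down: cascading borrows turn a trailing $0$ of $2n$ into a run of $9$'s in $2n-1$, so $\prod_{j>k}a_j$ for $2n$ and $2n-1$ no longer match. This case-split bookkeeping will be the main delicate point. I would handle it by observing that $C(2n)=0$ forces $a(n)=a(n-1)$, while $b(n)=b(n-1)+C(2n-1)$ with $C(2n-1)\le 9^{m+1}$. Since $n-1$ is then not a multiple of $5$, Step~1 applies to give $a(n-1)/b(n-1)\to\tfrac{4}{5}$, and because $C(2n-1)=o(b(n-1))$ the perturbation does not disturb the limit. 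Once this residue class mod $5$ is isolated, nothing else is really hard: the $4{:}5$ structural coincidence baked into the two closed formulas essentially does the work for us.
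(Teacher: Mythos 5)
Your proposal is correct, and it takes a genuinely different route from the paper. The paper's proof bounds $\tfrac{a(n)}{b(n)}$ from above by $\tfrac{f(n)}{g(n)}=\tfrac{4}{5}r(n)$ with $r(n)\to 1$, using the upper bounds of Theorems \ref{a(n)3} and \ref{b(n)2}, and then obtains the matching lower bound only by \emph{assuming} Conjecture \ref{con1}; so the paper's argument is conditional on an unproven statement (and, strictly speaking, even its upper half needs a lower bound on $b(n)$ of the form $b(n)\ge g(n)\cdot(1+o(1))$, which Theorem \ref{b(n)2} does not supply, since it only gives $b(n)\le g(n)$). You instead work directly with the closed formulas of Theorems \ref{t1} and \ref{t2} and exploit the exact $4{:}5$ ratio of corresponding terms: for $n\nequiv 0\pmod 5$ the digits $a_1,\dots,a_m$ of $2n$ and $2n-1$ coincide and your computation $\tfrac{5}{4}a(n)-b(n)=\bigl[\prod_{i=1}^{m}a_i\bigr]\tfrac{a_0(a_0+10)}{16}\le 9^{m+1}$ checks out, which against $b(n)\ge\tfrac{25}{44}(45^{m}-1)$ gives an error of order $(9/45)^m$; the residue class $n\equiv 0\pmod 5$ is correctly reduced to the previous case via $a(n)=a(n-1)$ and $C(2n-1)=o(b(n-1))$. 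The payoff is substantial: your argument is unconditional, it yields an explicit rate of convergence, and as a by-product it proves the lower inequality $\tfrac{5}{4}a(n)\ge b(n)$, i.e.\ $\tfrac{a(n)}{b(n)}\ge\tfrac{4}{5}$, for all $n\nequiv 0\pmod 5$ --- most of the first inequality of Conjecture \ref{con1}, which the paper leaves open. The only detail worth writing out fully is the digit bookkeeping in your Step 2 (the number of digits of $2n-2$ can be one less than that of $2n$ when $2n$ is a power of $10$), but this affects the estimate only by a bounded factor and does not threaten the argument.
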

\begin{proof}
From theorems \ref{a(n)3}, \ref{b(n)2} we know that:
\[
\frac{a(n)}{b(n)}\leq\frac{\frac{5}{11}\Bigl[45^{\log_{10}(\frac{n+1}{5})+1}-1\Bigr]}{\frac{25}{44}\Bigl[45^{\log_{10}(\frac{n}{5})+1}-1\Bigr]}=\frac{4}{5}\underbrace{\frac{\Bigl[45^{\log_{10}(\frac{n+1}{5})+1}-1\Bigr]}{\Bigl[45^{\log_{10}(\frac{n}{5})+1}-1\Bigr]}}_{\text{$r(n)$}}\underset{n\to+\infty}{\longrightarrow}\frac{4}{5}
\]
Hence $\forall\varepsilon>0\exists n_0\in\mathbb{N}$ such that:
\[
|r(n)-1|<\varepsilon \mbox{ $\forall n>n_0$}
\]
So:
\[
\frac{a(n)}{b(n)}-\frac{4}{5}\leq\frac{4}{5}r(n)-\frac{4}{5}=\frac{4}{5}(r(n)-1)<\frac{4}{5}\varepsilon\mbox{ $\forall n>n_0$}
\]
And assuming conjecture \ref{con1}, we'll have:
\[
\frac{a(n)}{b(n)}-\frac{4}{5}\geq0>-\varepsilon\mbox{ $\forall n\in\mathbb{N}$}
\]
Which concludes the proof.
\end{proof}
\section*{Acknowledgment}
I would like to thank the oeis editors and staff for reviewing my changes at these sequences.

\end{document}